\documentclass{amsart}
                        \input xypic

\usepackage{amsthm, amsfonts, amssymb, amsmath, latexsym, enumerate, times}
\usepackage[all]{xy}
\usepackage[latin1]{inputenc}
\usepackage{mathrsfs}
\usepackage{mathtools}
\usepackage{letltxmacro}
\usepackage{comment}
\LetLtxMacro\orgvdots\vdots
\LetLtxMacro\orgddots\ddots

\makeatletter

\usepackage{array}
\usepackage{comment}
\usepackage{paralist}
\usepackage{xcolor}

\newtheorem{theorem}{Theorem}
\newtheorem{lemma}[theorem]{Lemma}

\newtheorem{corollary}[theorem]{Corollary}
\newtheorem{proposition}[theorem]{Proposition}

\theoremstyle{definition}

\newcommand{\calE}{{\mathcal E}}

\newcommand{\calH}{{\mathcal H}}

\newcommand{\calP}{{\mathcal P}}
\newcommand{\calO}{{\mathcal O}}

\newcommand{\bbF}{{\mathbb F}}
\newcommand{\bbP}{{\mathbb P}}

\newcommand{\bbZ}{{\mathbb Z}}

\newcommand{\frakB}{{\mathfrak B}}
\newcommand{\frake}{{\mathfrak e}}
\newcommand{\frakf}{{\mathfrak f}}

\def\geq{\geqslant}
\def\leq{\leqslant}

\begin{document}
 
\title[Constructions for rational multiple planes]
{Constructions for rational multiple planes}

\begin{abstract} 
A finite, normal cover $f: X\longrightarrow \bbP^2$ of degree $m\geq 3$ 
(the case $m=2$ is well known and we do not consider it in this paper) 
is called \emph{simple}, 
if there is a pencil $\mathcal P$ of rational curves of $\bbP^2$ 
such that the pull back via $f$ of $\mathcal P$ is a pencil of rational curves on $X$. 
Up to Cremona equivalence $\mathcal P$ can be assumed to be 
the pencil of lines through a fixed point $p\in \bbP^2$. 
If $\frakB$ is the branch curve of such a multiple plane, 
the general line through $p$ has to intersect $\frakB$ in $2m-2$ branch points 
(counted with multiplicities). 
If $p$ is not one of these branch points, 
then the multiple plane is said to be \ \emph{simpler}. \
In that case the branch curve will have 
a point of multiplicity $\deg(\frakB)-2m+2$ at $p$. 

In this paper we classify, under suitable generality conditions for the branch curve, 
{ simpler } triple planes 
up to Cremona equivalence
(they belong to infinitely many non--Cremona equivalent families) 
and we give examples of infinitely many non--Cremona equivalent families 
of {simpler } multiple planes of degree $m\geq 4$. 
\end{abstract}

\author{Ciro Ciliberto}
\address{Dipartimento di Matematica, Universit\`a di Roma Tor Vergata, Via O. Raimondo 00173 Roma, Italia}
\email{cilibert@axp.mat.uniroma2.it}

\author{Rick Miranda}
\address{Department of Mathematics, Colorado State University, Fort Collins (CO), 80523,USA}
\email{rick.miranda@colostate.edu}
 
\subjclass{Primary 14J26, 14M20, 14E07, 14E08, 14E20; Secondary 14E22, 14F05}
 
\keywords{multiple planes, Tschirnhausen bundle, branch curve, Cremona transformation, Hirzebruch surfaces}
 
\maketitle

\tableofcontents


\section*{Introduction}

The study of finite, normal, multiple covers of the plane 
is a classical subject in algebraic geometry. 
A problem that is still widely open is the one of classifying, 
up to Cremona transformations of the plane, 
the multiple covers $f: X\longrightarrow \bbP^2$ as above 
such that $X$ is rational. 
The first contribution to this subject goes back to the paper 
\cite {CE} of 1900 by G. Castelnuovo and F. Enriques. 
There the authors claimed to classify rational double covers of the plane 
up to Cremona transformations. 
The arguments in \cite {CE} were not complete 
and a full treatment of the subject has been given only more recently 
by L. Bayle and A. Beauville \cite {BB} in 2000 
and by A. Calabri in his PhD thesis published in 2006 (see \cite {Cal}). 
In Calabri's thesis there is also the classification, up to Cremona transformations, 
of rational cyclic triple covers of the plane. 

Besides these results, not too much is known on this subject. 
In principle, using the results in J. Blanc's thesis \cite {Bl} 
(i.e,  the classification, up to conjugation, 
of the finite abelian subgroups of the Cremona group of the plane) 
and in R. Pardini's paper \cite {Par} 
(i.e, the  description of the structure of finite, abelian covers of algebraic varieties),  
it would be possible to classify, up to Cremona transformations, 
the rational abelian covers of the plane. 
For instance in the recent preprint \cite {Cil} 
there is the classification of Galois rational covers of the plane 
with Galois group of the type $\bbZ_2^r$, with $r\geq 3$. 
However, for example, the classification, up to Cremona transformation of the plane, 
of rational non--cyclic triple planes is still unknown. 

\ In general the problem of classifying rational multiple covers of the plane 
seems to be extremely hard.  Since the case of double covers is  relatively easy,
here we deal exclusively with  covers of degree $m \geq 3$. In this paper we focus on a special family of rational multiple planes of degree $m\geq 3$. Namely we consider multiple planes $f: X\longrightarrow \bbP^2$ such that there is a pencil $\mathcal P$ of rational curves of $\bbP^2$ such that the pull--back
of the general member of $\calP$ is an irreducible rational curve on the cover.
 Let us call these multiple planes \emph{simple multiple planes}. 
 
 Up to Cremona equivalence, any such pencil of rational curves of $\bbP^2$  
is equivalent to a pencil of lines through a fixed point 
(see \cite [Prop. 5.3.5]{Cal});
hence we may assume that $\calP$ is the pencil of lines through a point $p\in \bbP^2$.

Since in principle, as we said, the cyclic case can be dealt with using \cite {Bl, Par}, 
we will focus on the non--cyclic case. \

Let $f: X\longrightarrow \bbP^2$ be a simple multiple plane of degree $m\geq 3$ as above 
in which $\calP$ is the pencil of lines through a point $p\in \bbP^2$. 
Let $\frakB\subset \bbP^2$ be the branch curve of the multiple plane, 
which has even degree. 
Since the pull--back to $X$ of a general line $r$ through $p$ is irreducible of genus $0$, 
the number of branch points on $r$, given by Riemann--Hurwitz formula, must be $2m-2$, 
if we count each of them with the appropriate multiplicity. 
If $p$ does not count as one of these branch points 
and if the branch curve is reduced, 
we will say that the simple multiple plane is \ \emph{simpler}. \
In that case the branch curve $\frakB$ has to meet the general curve through $p$ 
in $2m-2$ points off $p$, 
so it must have a point of multiplicity $\deg(\frakB)-2m+2$ at $p$. 
A first step in the classification would be to 
classify (non--cyclic)  \ simpler \ multiple planes of degree $m\geq 3$. 

Note that the consideration of {simpler } multiple planes 
$f: X\longrightarrow \bbP^2$ of degree $m$ 
is equivalent to  the consideration of  multiple covers 
$\pi: S\longrightarrow \bbF_1$ of degree $m$ 
such that the branch curve $B$ is reduced 
and does not contain the $(-1)$--curve $E$ on $\bbF_1$ 
and intersects the general fibre $F$ of the structure morphism 
$\bbF_1\longrightarrow \bbP^1$ in $2m-2$ points. 
We will make the generality hypothesis that $B$ on $\bbF_1$ 
has only simple cusps as singularities and we will often take this viewpoint. 

In this paper we first focus on the case $m=3$ 
and we do classify {simpler } triple planes, 
at least under some generality assumptions 
(partly specified above, partly in the course of the paper). 
First, in Section \ref {sec:trip}, 
using the general theory of triple covers as exposed in \cite{Miranda85}, 
we construct infinitely many families of such rational triple planes. 
Then in Section \ref {sec:geom} we give a geometric interpretation of this construction, 
and, using it, in Section \ref {sec:conv} 
we prove that the {simpler } triple planes we constructed 
are the only such triple planes, thus proving a classification theorem. 
In Section \ref {sec:mult}, 
inspired by the geometric description of the above triple planes, 
we construct infinitely many families of {simpler } multiple planes 
of any degree $m\geq 4$. 
Finally, in Section \ref {sec:ce} we prove that 
sufficiently general multiple planes as above 
with branch curves of different degree are not Cremona equivalent. 

We stress that, up to our knowledge, 
so far even the existence of infinitely many families 
of non--Cremona equivalent rational multiple planes of a given degree $m\geq 3$
was not known. 

{\
The reader may ask about the more restrictive case
where a multiple plane $f:X \to \bbP^2$
has a rational pre-image of the general line,
not just the general curves in a pencil of rational curves.
The analysis of the general situation here is rather straightforward:
since $X$ has a net of rational curves,
the complete linear system of that net
maps $X$ to a rational scroll or a Veronese surface,
and the map $f$ is obtained as a projection of such a surface.
If the projection is general, the numerical characters
(degree, singularities) are classically computed;
it is however an open question about special positions for the projection,
which could provide a list of allowable degenerations of the branch locus
for such a projection.  
These are not classified,
except for triple covers with degree $4$ branch curve, 
where the branch curve is either an irreducible quartic with three ordinary cusps,
or the union of a cuspidal cubic and its (unique) flexed line; see \cite{CM26}.
}
\medskip

\noindent {\bf Acknowledgements:} C. Ciliberto is a member of GNSAGA of the Istituto Nazionale di Alta Matematica. \medskip

\section{Constructing {simpler } triple covers of the plane}\label{sec:trip}

\subsection{The algebraic construction} In this section we will freely use the results in \cite {Miranda85}.

Fix strictly positive integers $x,y$ with $2x > y$ and $2y > x$,
and set $e=x+y$.
Let $Y=\bbF_1$ be the blowup of $\bbP^2$ at one point, which is a ruled surface;
call the exceptional divisor $E$ and the fiber class $F$.
The pull--back to $Y$ of a general line of the plane has class $H = E+F$.
Consider the decomposable rank two bundle on $Y$
\begin{equation}\label{eq:split}
\calE = \calO_Y(-E-xF)\oplus\calO_Y(-E-yF).
\end{equation}
The Chern classes of $\calE$ are computed to be
\begin{equation}
c_1(\calE) = -2E-(x+y)F=-2E-eF \;\;\;\text{ and }\;\;\; c_2(\calE) = x+y-1 = e-1
\end{equation}
and the standard intersection numbers are
\begin{equation}
c_1(\calE)^2 = 4e-4;\;\;\; c_1(\calE)\cdot K_Y = (-2E-eF)\cdot(-2E-3F) = 2e+2.
\end{equation}
We note that
\[
S^3\calE^* \cong 
\calO_Y(3E+3xF) \oplus 
\calO_Y(3E+(2x+y)F) \oplus 
\calO_Y(3E+(x+2y)F) \oplus 
\calO_Y(3E+3yF)
\]
and
\[
\wedge^2\calE \cong \calO_Y(-2E-(x+y)F) = \calO_Y(-2E-eF)
\]
so that
\[
S^3\calE^* \otimes \wedge^2\calE \cong
\calO_Y(E+(2x-y)F) \oplus 
\calO_Y(E+xF) \oplus 
\calO_Y(E+yF) \oplus 
\calO_Y(E+(2y-x)F).
\]

A triple cover of $Y$ can be built from a general section of this rank $4$ bundle;
our assumptions on $x$ and $y$ imply that each of the four line bundles have smooth general movable members, and in fact are very ample if $x,y \geq 2$.

A section of this rank four bundle consists of four sections
\begin{gather*}
a \in H^0(\calO_Y(E+xF)); \\
b \in H^0(\calO_Y(E+(2x-y)F)); \\
c \in H^0(\calO_Y(E+(2y-x)F); \\
d \in H^0(\calO_Y(E+yF)) 
\end{gather*}
which give the structure constants of the $\calO_Y$-algebra 
defining the cover $\pi:S\to Y$.
If we set
\begin{align*}
A &= a^2-bd \in H^0(2E+2xF); \\
B &= ad-bc \in H^0(2E+eF)); \\
C &= d^2-ac \in H^0(2E+2yF); \\
D &= B^2-4AC \in H^0(4E+2eF),
\end{align*}
the branch locus for the cover $\pi$ is the divisor $B$ defined by $D=0$;
its class is $-2c_1(\calE) = 4E+2eF$. If the above sections are general enough, $B$ is  irreducible and reduced and has only simple cusps as singularities. The number of these cusps is $3c_2(\calE) = 3e-3$.
In the rest of the paper we will suppose that these generality assumptions are verified.

In conclusion we have proved:

\begin{proposition}\label{prop:exist} 
Given two positive integers $x,y\geq 2$,
with $2x>y$ and $2y>x$,
there are triple covers $\pi: S\longrightarrow Y=\bbF_1$, 
with Tschirnhausen bundle given by \eqref {eq:split} and branch curve $B\in 4E+2eF$ (where $e=x+y$) which is irreducible and reduced.
\end{proposition}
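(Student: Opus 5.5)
The construction rests on Miranda's structure theorem for triple covers \cite{Miranda85}. For a rank two bundle $\calE$ on a smooth surface $Y$, every global section $\phi$ of $S^3\calE^*\otimes\wedge^2\calE$ defines a commutative associative $\calO_Y$-algebra structure on $\calO_Y\oplus\calE$. Hence it defines a triple cover $\pi:S\to Y$ with $\pi_*\calO_S=\calO_Y\oplus\calE$. The branch divisor is cut out by the discriminant of the associated binary cubic form, which is the section $D$ of $(\wedge^2\calE)^{-2}$. So the plan is the following.

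\begin{enumerate}
\item Take $\calE$ as in \eqref{eq:split} and compute $S^3\calE^*\otimes\wedge^2\calE$ as the displayed sum of four line bundles.
\item Choose general sections $a,b,c,d$ of those line bundles.
\item Check that the resulting cover has Tschirnhausen bundle $\calE$ and branch curve $B=\{D=0\}\in|4E+2eF|$.
\item Prove that $B$ is irreducible and reduced for general choices.
\end{enumerate}
Step (3) is automatic from \cite{Miranda85}. The class of $B$ is $-2c_1(\calE)=4E+2eF$, and $S$ is a genuine (possibly singular) triple cover as soon as $D\not\equiv 0$.

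The hypotheses $x,y\ge 2$, $2x>y$, $2y>x$ make every line bundle $E+kF$ with $k\in\{x,y,2x-y,2y-x\}$ satisfy $k\ge 1$. Such a line bundle is base point free on $\bbF_1$, and very ample when $k\ge 2$. So the space of sections is large, and the branch divisor of a general section is a well-behaved member of a family parametrized by an irreducible (affine) space. In particular, properties that are open on this parameter space, such as reducedness, irreducibility, and having only cusps, hold generally as soon as they hold for one example.

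The main obstacle is Step (4). I would argue in two stages.

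First, $D\not\equiv0$ and $B$ is reduced. I would restrict to a general fibre $F\cong\bbP^1$, where $\calE|_F\cong\calO(-1)^2$. The sections restrict to a general binary cubic form in the fibre coordinates, with coefficients in $H^0(\calO_{\bbP^1}(1))$. Its discriminant vanishes at $4=2m-2$ distinct points, since a general pencil of binary cubics has only simple discriminant zeros. This gives reducedness along fibres, hence $B$ is reduced and contains no fibre components. Also $E$ is not a component of $B$: on $E$ the sections have degree $E\cdot(E+kF)=k-1\geq 0$, and by choosing the sections suitably the restricted discriminant is nonzero.

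Second, irreducibility. I would first try a connectedness argument: $B\cdot F=4$ and $B\cdot E=2e-4>0$. A decomposition $B=B_1+B_2$ with $B_i\in|\alpha_iE+\beta_iF|$ would then have to be compatible with the monodromy of the four branch points on fibres. For a general section this monodromy is the full group $S_4$ acting on the discriminant roots of a generic cubic pencil, since the sections are general in very ample systems; this rules out splitting. Alternatively, I would use the local structure: by \cite{Miranda85}, for general sections $B$ has only cusps, $3c_2(\calE)=3e-3$ of them, at the common zeros of $A,B,C$. A reducible reduced $B$ would instead have nodes where the components meet, since $B_1\cdot B_2>0$ on $\bbF_1$ for effective non-fibre classes.

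So once one knows that for general sections the singularities of $B$ are only the $3e-3$ cusps (the generality statement the paper adopts), irreducibility follows. I would regard verifying this genericity, via a dimension count or Bertini-type argument on the very ample systems, as the step needing the most care.
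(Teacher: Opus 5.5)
Your proposal follows essentially the same route as the paper: apply Miranda's structure theorem to the split bundle \eqref{eq:split}, take general sections $a,b,c,d$ of the four summands of $S^3\calE^*\otimes\wedge^2\calE$, and read off the branch curve $D=0$ in $|{-2c_1(\calE)}|=|4E+2eF|$. The paper gives no further argument and simply asserts that for general sections $B$ is irreducible, reduced and has only ordinary cusps. Your reduction of irreducibility to the unibranch nature of cusps together with $B_1\cdot B_2>0$ therefore adds detail the paper omits, and it also covers the possible special fibre components that your general-fibre reducedness argument alone would not exclude. The genericity of the cusps, which you flag as unverified, is exactly the assumption the paper takes for granted.
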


\subsection{The pre-images of the fibers}\label{sec:fibres}
The general curves in the linear system $|F|$ of $Y=\bbF_1$ 
lift on $S$ to curves $C$ which are triple covers of $\bbP^1$ 
branched at $B\cdot F = (4E+2eF)\cdot F = 4$ points.
These curves are therefore rational, by the Riemann--Hurwitz formula.
We conclude therefore that $S$ has a rational pencil of rational curves $|C|$ 
and therefore it is rational by Noether's theorem. 

\subsection{Invariants of the triple cover}

Using Riemann-Roch, we see that the arithmetic genus of $B$ is
\[ 
p_a(B) = \frac{1}{2}(B^2+B\cdot K_Y) + 1 =6e-9. \\
\] 

The genus of the ramification divisor $R$ on $X$ (which is smooth) is
\[ 
g(R) = 2c_1(\calE)^2 - c_1(\calE)\cdot K_Y + 1 - 3c_2(\calE) = 3e-6\\
\] 
and this is also the geometric genus $g(B)$ of $B$. 
This is consistent with the fact that $g(B)=p_a(B)-(3e-3)$, 
where $3e-3$ is the number of cusps and these are the only singularities of $B$. 

Using Proposition 10.3 of \cite{Miranda85}, we have the following.
\begin{align*}
\chi(\calO_S) &= 3\chi(\calO_Y) + \frac{1}{2}c_1(\calE)^2 - \frac{1}{2}c_1(\calE)\cdot K_Y - c_2(\calE) = 1,\\
K_S^2 &= 3K_Y^2 -4c_1(\calE)\cdot K_Y + 2c_1(\calE)^2 - 3c_2(\calE) = 11-3e, \\
e(S) &= 3e(Y) - 2c_1(\calE)\cdot K_Y + 4c_1(\calE)^2 - 9c_2(\calE) -3e+1. \\
\end{align*}

The fact that $\chi(\calO_S) = 1$ agrees with the fact that $S$ is rational.
The other invariants are consistent with $S$ being a rational surface 
which is a blowup of $\bbP^2$ at $3e-2$ (distinct or infinitely near) points 
or the blow--up of a Hirzebruch surface $\bbF_a$ 
at $3e-3$ (distinct or infinitely near)  points.

\subsection{The pre-image of $E$}\label{sec:prb}
The branch curve $B$ meets the $(-1)$-curve $E$ in
$(4E+2eF)\cdot E = 2e-4$ points. 
We will make the further assumption that these are distinct. 
Then the pre-image $G$ of $E$
is a smooth triple cover of $E$ branched at $2e-4$ points,
and hence has genus satisfying the Riemann--Hurwitz formula
\[
2g(G)-2 = 3(-2) + (2e-4) = 2e-10
\]
implying that $G$ is a curve of geometric genus $g(G) = e-4$.
Since $\pi^*(E) = G$, we have  $G^2 = -3$.

As long as $e \geq 4$, 
we can make a contraction $\phi: S\longrightarrow X$ of the curve $G$ 
(and simultaneously contract $E$ at a point $p$ of $\bbP^2$)
to obtain a map $f:X  \longrightarrow \bbP^2$
which is the desired {simpler } triple plane,
where $X$ is singular (at the image of the curve $G$). 
The branch curve $\frakB$ of $f:X  \to \bbP^2$ has degree $2e$, 
has an ordinary point of multiplicity $2e-4$ at $p$ 
and in addition $3e-3$ {\ ordinary } cusps. 


\subsection{The pre-image of the line class $H$}\label{sec:prh}
The branch curve $B$ meets the line class $H= E+F$ in $(4E+2eF)\cdot H = 2e$ points.
The  pre-image of a general curve in $|H|$ is therefore a smooth curve $M$ 
which has  genus $g(M)$ satisfying
$2g(M)-2 = 3(-2)+2e$ and hence $g(M) = e-2$;
since $H^2=1$, we have $M^2 = 3$.
Hence $M\cdot K_S = 2e-9$.
By Riemann-Roch, we see that
\begin{equation}\label{eq:longa}
\chi(\calO_S(M)) = 1+(M^2-M\cdot K_X)/2 = 1+(3-2e+9)/2 = 7-e.
\end{equation}

\begin{proposition}\label{prop:ex} 
One has $h^0(\calO_S(M))=3$ and therefore $h^1(\calO_S(M))=e-4$.
\end{proposition}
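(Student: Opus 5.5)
Since $M=\pi^*H$, I will compute $h^0(\calO_S(M))$ with the projection formula for the finite flat triple cover $\pi\colon S\to Y$. By \cite{Miranda85}, $\pi_*\calO_S\cong \calO_Y\oplus\calE$, with $\calE$ the Tschirnhausen bundle \eqref{eq:split}. Therefore
\[
H^0(S,\calO_S(M)) \cong H^0(Y,\calO_Y(H))\oplus H^0(Y,\calE(H)),
\]
and
\[
\calE(H)=\calO_Y(E+F-E-xF)\oplus\calO_Y(E+F-E-yF)=\calO_Y((1-x)F)\oplus\calO_Y((1-y)F).
\]

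The first summand has dimension $h^0(\calO_Y(H))=h^0(\calO_{\bbP^2}(1))=3$. For the second, $x,y\geq 2$ gives $1-x<0$ and $1-y<0$. A line bundle $\calO_Y(kF)$ with $k<0$ has no sections, since its intersection with the nef class $H$ is $k<0$. So $h^0(\calE(H))=0$, and hence $h^0(\calO_S(M))=3$. This says that the only sections of $M$ are the pull-backs of linear forms on $\bbP^2$.

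For $h^1$, I will show $h^2(\calO_S(M))=0$. By Serre duality, $h^2(\calO_S(M))=h^0(\calO_S(K_S-M))$. Fix a general member $C\in|\pi^*F|$; it is a smooth rational curve with $C^2=0$ (\S\ref{sec:fibres}), and $C\cdot M=\pi^*F\cdot\pi^*H=3$. Adjunction gives $K_S\cdot C=-2$, so $(K_S-M)\cdot C=-5<0$. Since the curves $C$ move in a pencil covering $S$, $K_S-M$ cannot be effective, and $h^2=0$. (Alternatively, compute $h^2(\calO_S(M))=h^2(Y,\calO_Y(H))+h^2(Y,\calE(H))$ and check vanishing on $Y=\bbF_1$ by duality with $K_Y=-2E-3F$.)

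Combining with \eqref{eq:longa}, $h^1(\calO_S(M))=h^0-\chi=3-(7-e)=e-4$. There is no real obstacle here. The only points needing care are quoting correctly that $\pi_*\calO_S=\calO_Y\oplus\calE$ (and not its dual), and using the hypothesis $x,y\geq2$ to kill the sections of $\calE(H)$.
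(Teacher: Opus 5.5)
Your argument is correct, but it takes a different route from the paper's.

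You push everything down to $Y$ using the projection formula and Miranda's decomposition $\pi_*\calO_S\cong\calO_Y\oplus\calE$. This gives $H^i(\calO_S(M))\cong H^i(\calO_Y(H))\oplus H^i(\calO_Y((1-x)F))\oplus H^i(\calO_Y((1-y)F))$, and the hypothesis $x,y\geq 2$ enters exactly to kill the sections of the last two summands. The decomposition also gives $h^1$ directly, without $\chi$ or a separate $h^2$-vanishing: since $h^1(\calO_Y(kF))=-k-1$ for $k<0$, you get $h^1(\calO_S(M))=(x-2)+(y-2)=e-4$.

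The paper instead stays on $S$. It writes $M\sim G+C$ with $G=\pi^*E$ and uses the restriction sequence $0\to\calO_S(C)\to\calO_S(M)\to\calO_G(M)\cong\calO_G\to 0$. Together with $h^0(\calO_S(C))=2$ and $h^1(\calO_S(C))=h^2(\calO_S(C))=0$, this yields $h^0(\calO_S(M))=2+1=3$ and $h^1(\calO_S(M))=h^1(\calO_G)=g(G)=e-4$.

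What your approach buys is a fully explicit computation from the algebraic data of the construction. It also shows transparently why the hypothesis $x,y\geq 2$ is needed: for $x=1$, extra sections would appear.

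What the paper's approach buys is independence from the splitting type of the Tschirnhausen bundle. It uses only the pencil $|C|$ of rational curves and the triviality of $M$ on $G$. So the same argument gives $\dim|M|=2$ for any general simpler triple plane, which is exactly what Section~\ref{sec:conv} relies on before it knows that $\calE$ splits. It also explains the value $e-4$ intrinsically, as the genus of $G$.
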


\begin{proof} 
Since $H\cdot E=0$, we have $M\cdot G=0$. 
Moreover since $H=E+F$, we have $M=G+C$. 
Consider the exact sequence
\begin{equation}\label{eq:long}
0\longrightarrow \mathcal O_S(C) \longrightarrow \mathcal O_S(M) \longrightarrow \mathcal O_G(M)\cong \mathcal O_G\longrightarrow 0.
\end{equation}
We have $h^0(\mathcal O_S(C))=2$, and $h^i(\mathcal O_S(C))=0$, for $1\leq i\leq 2$.
From the long exact sequence of \eqref {eq:long}, 
we immediately deduce that $h^0(\calO_S(M))=3$ and 
$$h^1(\calO_S(M))=h^1(\calO_G)=g(G)=e-4,$$ 
as desired. This also follows from $h^0(\calO_S(M))=3$ and \eqref {eq:longa}.
\end{proof}

The map to $\bbP^2$ is of course given by the system $|M|$;
in the next section we will give a geometric construction for this system.

\section{Geometric description}\label{sec:geom}

In this section we are keeping all the generality assumptions 
we made so far on the branch curve $B$. 

\subsection{Contraction to a Hirzebruch surface} 
As we saw in Section \ref {sec:fibres}, 
there is a rational pencil $|C|$ of rational curves on $S$, 
i.e., the pull backs of curves in $|F|$ on $Y=\bbF_1$.
We can blow down the ruled surface $S$
to a minimal rational ruled surface $\bbF_h$ for a suitable $h\geq 0$,
obtaining a morphism $g:S \to \bbF_h$,
by contracting (in sequence) $(-1)$--curves which are components of curves in $|C|$.
The number of contractions is $3e-3$ because, as we saw, $K^2_S=11-3e$. 

\begin{lemma}\label{lem:contr} 
It is possible to make the contraction $g: S\longrightarrow \bbF_h$ 
in such a way that the image $\bar G$ of $G$ on $\bbF_h$ is still smooth. 
\end{lemma}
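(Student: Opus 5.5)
The plan is to perform the $3e-3$ contractions one at a time, always choosing a $(-1)$--curve that meets the current image of $G$ in at most one point, transversally. Contracting such a curve cannot create a singularity on the image of $G$, so the lemma follows by induction on the number of contractions.

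First I record the starting data. $G$ is smooth, by the assumption in Section \ref{sec:prb} that $B\cap E$ consists of $2e-4$ distinct points. Moreover $G\cdot C = \pi^*E\cdot \pi^*F = 3E\cdot F = 3$, so $G$ is a trisection of the ruling $|C|$. Along the sequence of blow--downs, write $S_i$ for the intermediate surfaces, $G_i$ for the image of $G$, and $C_i$ for the fibre class of $S_i\to\bbP^1$. Since all contractions are of components of fibres, $G_i\cdot C_i=3$ at every stage.

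Inductive step: suppose $S_i$ is not yet minimal, so some fibre $\Phi=\sum m_j\Gamma_j$ is reducible. Every reducible fibre of a ruled surface contains a $(-1)$--curve, and since $G_i$ is not a fibre component we have $\sum_j m_j\, G_i\cdot\Gamma_j = 3$ with all terms nonnegative. I distinguish two cases.
\begin{itemize}
\item[(a)] If $\Phi$ has a $(-1)$--component $\Gamma$ of multiplicity $m\geq 2$, then $m\,G_i\cdot\Gamma\leq 3$ forces $G_i\cdot\Gamma\leq 1$.
\item[(b)] Otherwise some $(-1)$--component $\Gamma$ has multiplicity $1$. I invoke the standard fact on degenerate fibres of ruled surfaces (e.g.\ Barth--Peters--Van de Ven): a reducible fibre containing a reduced $(-1)$--curve contains at least one other reduced $(-1)$--curve $\Gamma'$. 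One can prove this by induction on the number of components, since such a fibre arises from a chain of blow--ups of a $\bbP^1$ fibre. Then $G_i\cdot\Gamma+G_i\cdot\Gamma'\leq 3$, so one of the two meets $G_i$ at most once.
\end{itemize}
In either case there is a $(-1)$--curve $\Gamma$ with $G_i\cdot\Gamma\in\{0,1\}$. Because $G_i$ is smooth, intersection number $1$ means a single transverse point.

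Contracting $\Gamma$ then maps $G_i$ isomorphically onto its image, so $G_{i+1}$ is smooth. (If $G_i\cdot\Gamma=0$ the curve is untouched; if $G_i\cdot\Gamma=1$, blowing down a smooth curve through a smooth point of it transversally keeps it smooth.) After $3e-3$ steps we reach $\bbF_h$ with $\bar G$ smooth.

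The main obstacle is the fibre--structure fact in case (b). I must make sure the degenerate fibres really can have only the two behaviours above, including the possibility of a fibre with exactly two components, each a reduced $(-1)$--curve. That configuration is covered by (b): the two intersection numbers with $G_i$ sum to $3$, so one of them is at most $1$.
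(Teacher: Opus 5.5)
Your argument is correct, but it follows a different route from the paper.

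\textbf{How the two proofs differ.} The paper relies on the finiteness of $\pi$. Every component $\Gamma_j$ of a fibre $C_0=\pi^*F_0=\sum m_j\Gamma_j$ maps onto $F_0$ with some degree $d_j\geq 1$, so $G\cdot\Gamma_j=d_j$ and $\sum m_jd_j=3$. Hence the reducible fibres of $S$ have at most three components, and the paper lists the possible configurations and fixes in advance which curves to contract. You work stage by stage instead, using only two inputs: $G_i$ is a smooth trisection, and the general structure of degenerate fibres. This avoids any enumeration and works for an arbitrary smooth trisection of a birationally ruled surface. It also sidesteps the paper's case (ii), which is in fact vacuous: that fibre is $C_{01}+2C_{02}+C_{03}$, which would give $G\cdot C_0=4$.

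\textbf{What the paper's route gives that yours omits.} In the paper, every contracted curve meets the image of $G$ exactly once. The text uses this right after the lemma, for $\bar G^2=G^2+(3e-3)$ and for $Z\subset\bar G$ having length $3e-3$. Your procedure, as written, could contract a curve with $G_i\cdot\Gamma=0$. You can rule this out using finiteness. Every fibre component of $S$ meets $G$ positively. If $D'$ is the image of a component $D$ after contracting $\Gamma$, then
\[
D'\cdot G_{i+1}=D\cdot G_i+(D\cdot\Gamma)(G_i\cdot\Gamma)\geq D\cdot G_i .
\]
So positivity persists at every stage, and the case $G_i\cdot\Gamma=0$ never arises. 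With this remark added, your proof delivers everything the paper uses later.

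\textbf{The fact cited in (b) is overstated.} Take a fibre $C_1+C_2+C_3$ with self-intersections $-1,-2,-1$ and blow up $C_1\cap C_2$. The result is $C_1+2E'+C_2+C_3$ with self-intersections $-2,-1,-3,-1$. Here $C_3$ is a reduced $(-1)$-curve, yet the only other $(-1)$-curve, $E'$, has multiplicity $2$. What holds in general is only that such a fibre contains \emph{another} $(-1)$-curve $\Gamma'$, of some multiplicity $m'$. This is enough for your argument, for two reasons. In case (b) every $(-1)$-component is reduced by hypothesis. And in any case $G_i\cdot\Gamma+m'\,G_i\cdot\Gamma'\leq 3$ already forces $\min(G_i\cdot\Gamma,G_i\cdot\Gamma')\leq 1$.
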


\begin{proof} Only the reducible members of $|C|$ are affected by the contractions.
We must show that we can achieve the reduction to the minimal surface $\bbF_h$
by only contracting $(-1)$--curves that meet $G$ with intersection number at most one. 

For a curve $C_0\in |C|$, let $F_0=\pi(C_0)$. 
We have a morphism $\pi_{|C_0}: C_0\longrightarrow F_0\cong \bbP^1$, that is finite. Hence, if $C_0$ is reducible, it consists of at most three irreducible components, 
all smooth and rational. 

If $C_0$ consists of two irreducible components, 
one of them intersects $G$ in one point, the other in two points, 
and both are $(-1)$--curves. 
In the contraction $g: S\longrightarrow \bbF_h$ 
we can choose to contract only the $(-1)$--curve that intersect $G$ in one point. 

If $C_0$ consists of three irreducible components, 
each of these intersects $G$ in one point. 
Let us call $C_{01},C_{02},C_{03}$ the three components of $C_0$, 
with the property that $C_{01}\cdot C_{02}= C_{02}\cdot C_{03}=1$, $C_{01}\cdot C_{03}=0$.
There are two cases:\\
\begin{inparaenum}[(i)]
\item $C_{01}^2=C_{03}^2=-1, C_{02}^2=-2$;\\
\item $C_{01}^2=C_{03}^2=-2, C_{02}^2=-1$.\\
\end{inparaenum}
In case (i), we choose to contract $C_{01}$ and $C_{03}$. In case (ii) we choose to contract first $C_{02}$ and then the image of one of the other two curves. 

With these choices, the image $\bar G$ of $G$ in $\bbF_h$ is easily seen to be smooth.
\end{proof}

Note that
\begin{equation}\label{eq:g2}
\bar G^2=G^2+(3e-3)=3e-6.
\end{equation}
On this curve there is a (curvilinear) $0$--dimensional subscheme $Z$ of length $3e-3$ 
that is the contraction of the $3e-3$ curves $(-1)$-curves.  

Let us denote by $\frakf$ the class of the ruling of $\bbF_h$ 
and by $\frake$ the class of the section with $\frake^2=-h$. 
The classes $\frake$ and $\frakf$ generate the Picard group of $\bbF_h$.
Since $G\cdot F=3$, we have $\bar G\cdot \frakf=3$, 
hence we have a relation of the form
\begin{equation}\label{eq:g22}
\bar G\sim 3\frake+ \ell \frakf,
\end{equation}
for a suitable positive integer $\ell$. 
From \eqref {eq:g2} and \eqref {eq:g22} we deduce that
\begin{equation}\label{eq:e}
e=2\ell-3h+2.
\end{equation}
Moreover since $0\leq \bar G\cdot \frake=\ell-3h$ we have $\ell\geq 3h$. 
The solutions to the equation \eqref {eq:e} in $\ell$ and $h$ are of the form
$$
h=e+2k, \,\, \ell=2e-1+3k, \,\, \text{with $k$ an integer}.
$$
However, since we have $\ell\geq 3h$, we must have $3k+e\leq -1$. 
Moreover $h\geq 0$ yields $2k\geq -e$, so that
$$
-\frac{e}{2} \leq k\leq -\frac {e+1}{3}.
$$

\subsection{The linear system $\calH$ on $\bbF_h$}\label{ssec:H}  
We denote by $\calH$ the image via $g: S\longrightarrow \bbF_h$ 
of the linear system $|M|$ on $S$, 
where we recall that we set $M=\pi^*(H)$, 
with $H$ the pull back to $Y=\bbF_1$ of a general line of $\bbP^2$. 
Note that 
$$
\dim (\calH)=\dim (|M|)=2.
$$
We will denote by $\bar H$ the general curve in $\calH$.

Note that, since  $H\sim F+E$ on $Y$, we have $M\sim G+C$ on $S$. 
So all $(-1)$--curves we contracted in the map $g: S\longrightarrow \bbF_h$, 
that intersect $G$ in one point each, 
also intersect $M$ in one point each. 
This implies that the general curve $\bar H$ of $\calH$ is also smooth, 
and it cuts out on $\bar G$ a divisor containing the length $3e-3$ scheme $Z$. 

Moreover
$$
\bar H^2=M^2+3e-3=3e.
$$
Note next that 
$$
\bar H\sim G+\frakf \sim 3\frake+ (\ell+1) \frakf,
$$ 
and therefore
$$
\bar H\cdot \bar G=(3\frake+ (\ell+1) \frakf)\cdot (3\frake+ \ell \frakf)=3e-3
$$
(see \eqref {eq:e}). 
So $Z$ is the complete intersection of a general curve $\bar H$ of $\calH$ and $\bar G$. 

\begin{proposition}\label{prop:geom} 
The linear system $\calH$ defines a rational map 
$$
\varphi_{\calH}: \bbF_h\dasharrow \bbP^2
$$
such that there is  a commutative diagram 
\begin{equation}\label{eq:diag}
  \xymatrix{
    S \ar[d]_{g}  \ar[rr]^{\pi}  &  &  \ar[d]^{p} Y  \\
  \bbF_h  \ar@{-->}[rr]^{\varphi_{\calH}} & &\bbP^2
}
\end{equation}
where $p: Y=\bbF_1\longrightarrow \bbP^2$ is the obvious contraction of $E$. 
\end{proposition}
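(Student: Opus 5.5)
The statement is essentially formal once we know that $|M|$ on $S$ is the complete linear system defining $p\circ\pi$, and that $\calH$ is, by definition, the image of $|M|$ under the birational morphism $g$. The plan is to establish three things in order: that $|M|$ is exactly $\pi^*p^*|\calO_{\bbP^2}(1)|$; that $\calH$ is a net whose rational map is the composite $(p\circ\pi)\circ g^{-1}$; and that the diagram therefore commutes.

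First, the line class $H=E+F$ on $Y=\bbF_1$ is $p^*\calO_{\bbP^2}(1)$, and $|H|$ is base point free of dimension $2$, inducing $p$. Pulling back gives a net $\pi^*|H|\subseteq |M|$. This net has dimension $2$ because $\pi$ is surjective, so distinct divisors of $|H|$ pull back to distinct divisors. By Proposition \ref{prop:ex}, $h^0(\calO_S(M))=3$, so the inclusion is an equality: $|M|=\pi^*|H|$. Consequently $|M|$ is base point free, and the morphism $\varphi_{|M|}:S\to\bbP^2$ equals $p\circ\pi$, where $\bbP^2$ is identified with $|H|^*$.

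Next, $g:S\to\bbF_h$ is a birational morphism, the composite of $3e-3$ blow-downs constructed in Lemma \ref{lem:contr}, so $g^{-1}$ is a birational map that is an isomorphism off the finite set $g(\text{exceptional curves})$, which lies in $Z\subset\bar G$. The system $\calH$ consists of the images $g_*D$ for $D\in|M|$. Push-forward is injective on these divisors: if $g_*D=g_*D'$, then $D-D'$ is supported on exceptional curves and is linearly equivalent to zero, hence $D=D'$. So $\calH$ is a net of curves on $\bbF_h$, which is the equality $\dim(\calH)=2$ already noted.

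Finally, off the finite set of points blown up, a curve in $\calH$ is determined by its restriction, which agrees with the corresponding member of $|M|$ under the isomorphism $g$. Hence on the open set $U=\bbF_h\setminus g(\mathrm{Exc})$ we have $\varphi_{\calH}|_U=\varphi_{|M|}\circ g^{-1}|_U=p\circ\pi\circ g^{-1}|_U$. Two rational maps agreeing on a dense open set are equal, so $\varphi_{\calH}\circ g=p\circ\pi$ as rational maps, and indeed on all of $S$ since the right-hand side is a morphism. This is the commutativity of \eqref{eq:diag}.

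The only point needing care is the claim that the base locus of $\calH$ is concentrated at the contracted points (scheme-theoretically, $\calH$ is the system of curves in $|3\frake+(\ell+1)\frakf|$ through $Z$). This is not actually needed for the statement; the argument on the dense open set $U$ suffices. If one wants the sharper description, one uses $\bar H\cdot\bar G=3e-3=\operatorname{length}(Z)$, as shown above, to see that $Z$ is the full intersection.
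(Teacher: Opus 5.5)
Your argument is correct and is essentially the paper's. The paper's one-line proof observes that $g$ is the blow-up of $\bbF_h$ along the cluster $Z$, so that $|M|$ is the proper transform of $\calH$ and both systems define the same map $p\circ\pi$. You fill in the details the paper leaves implicit: that $|M|=\pi^*|H|$ (via Proposition \ref{prop:ex}), and that a linear system and its push-forward under a birational morphism agree on the dense open set where that morphism is an isomorphism. You also rightly note that the exact base scheme of $\calH$ is not needed for commutativity.
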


\begin{proof} 
The proof is immediate: note that $g: S\longrightarrow \bbF_h$ 
is just the blow up of $\bbF_h$ along the cluster determined by $Z$. 
\end{proof}

Proposition \ref {prop:geom} gives a complete geometric description 
of the triple planes we constructed in Section \ref {sec:trip}. 
Namely, one fixes an integer $h \geq 0$,
and takes a smooth curve $\bar G$ in the linear system $3\frake+\ell\frakf$,
where $\ell \geq 3h$.
We then take a general member $\bar H$ in the linear system
$|\frake+(\ell+1)\frakf|$,
and consider the complete intersection scheme $Z = \bar G \cap \bar H$.
We then consider the linear system $\calH$ consisting of those members of
the complete linear system $|\frake+(\ell+1)\frakf|$
that contain the scheme $Z$.
That is a two-dimensional linear system and gives the triple cover map to the plane.

\section{The converse analysis}\label{sec:conv}

In this section we consider a {simpler } triple plane 
$f: X\longrightarrow \bbP^2$, 
and the corresponding triple cover $\pi: S\longrightarrow Y=\bbF_1$,  
such that the branch curve $B$ is such that $B\cdot F=4$. 
We will assume that $B$ is \emph{sufficiently general}, 
i.e., irreducible and reduced with at most ordinary cusps as singularities, 
intersecting $E$ transversally at distinct points. 
In this case we will say that the {simpler } triple plane is also \emph{general}. 
In that case $S$ will be smooth.  We have 
\[
B\sim 4E+2eF,
\]
for some positive integer $e\geq 2$ and $B\cdot E=2e-4$. 

The main purpose of this section is to prove the following:

\begin{theorem}\label{thm:conv} 
The construction in Section \ref {sec:trip} gives all {simpler } general triple planes 
$f: X\longrightarrow \bbP^2$. 
\end{theorem}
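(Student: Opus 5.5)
Our plan is to show that the Tschirnhausen bundle $\calE$ of any general simpler triple cover $\pi:S\to Y=\bbF_1$ splits as in \eqref{eq:split}, with $x,y$ satisfying $2x>y$, $2y>x$. Since a triple cover is determined by $\calE$ together with a section of $S^3\calE^*\otimes\wedge^2\calE$ \cite{Miranda85}, this gives exactly the construction of Section \ref{sec:trip}.

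\textbf{Step 1: Chern classes.} The branch class is $-2c_1(\calE)=B\sim 4E+2eF$, so $c_1(\calE)=-2E-eF$. The cusps of $B$ number $3c_2(\calE)$. By Noether's formula, or equivalently using $\chi(\calO_S)=1$ since $S$ is rational (the pre-images of fibres are rational by Riemann--Hurwitz, as in Section \ref{sec:fibres}), Proposition 10.3 of \cite{Miranda85} gives
\[
1=3+\tfrac12 c_1^2-\tfrac12 c_1\cdot K_Y-c_2 .
\]
This forces $c_2(\calE)=e-1$.

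\textbf{Step 2: Restriction to a fibre.} On a general fibre $F\cong\bbP^1$, $\calE|_F$ has degree $-2$. The cover $\pi^{-1}(F)\to F$ is a rational triple cover, so $\pi_*\calO = \calO\oplus\calE|_F$ with $h^1(\calE|_F)=g=0$. Hence $\calE|_F\cong\calO(-1)^2$, the only splitting of degree $-2$ with both summands $\geq -1$.

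Let $\rho:Y\to\bbP^1$ be the ruling. Then $\calE(E)$ is trivial on the general fibre, so $V=\rho_*(\calE(E))$ is a rank $2$ bundle on $\bbP^1$ and $\rho^*V\to\calE(E)$ is generically an isomorphism.

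\textbf{Step 3: Splitting.} We need $\calE(E)\cong\rho^*V$, i.e. trivial on every fibre. Jumping fibres would contribute to $c_2$. Write $V=\calO(-x)\oplus\calO(-y)$. Then $\rho^*V\otimes\calO(-E)$ has
\[
c_1=-2E-(x+y)F,\qquad c_2=(E+xF)(E+yF)=x+y-1 .
\]
The inclusion $\rho^*V(-E)\hookrightarrow \calE$ has cokernel supported on finitely many fibres. Comparing $c_1$ shows the cokernel has $c_1=(e-x-y)F$, and comparing $c_2$ (both equal to $e-1$ when $x+y=e$) then forces the cokernel to vanish.

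\textbf{Main obstacle.} The delicate point is Step 3: controlling the elementary transformations along special fibres, i.e. excluding a nontrivial torsion cokernel. I would handle it by computing $c_2$ of an elementary modification along a fibre. It changes $c_1$ by a multiple of $F$ and $c_2$ by the corresponding amount. The identity $c_2=c_1\cdot(\text{fibre degree data})-1$ then pins down $x+y=e$ with no modification.

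\textbf{Step 4: Inequalities on $x,y$.} It remains to get $2x>y$ and $2y>x$. If, say, $2x-y\le 0$, the summand $E+(2x-y)F$ of $S^3\calE^*\otimes\wedge^2\calE$ is $E$ or has no sections. Then the coefficient $b$ vanishes or is forced to contain $E$, and the branch curve $D=(ad-bc)^2-4(a^2-bd)(d^2-ac)$ becomes reducible or nonreduced, contradicting generality. The borderline cases $2x=y$ lead to $B$ containing $E$ or to a non-normal cover.

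Finally, positivity and $x,y\ge 2$ follow from $B\cdot E=2e-4\ge 0$ together with distinctness of the points of $B\cap E$.
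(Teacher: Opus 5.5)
Your route differs from the paper's. The paper contracts $S$ to $\bbF_h$ and identifies $|M|$ with the net of curves in $|3\frake+(\ell+1)\frakf|$ through $Z=\bar G\cap\bar H$; you work directly with the bundle, which could give a more direct proof of the splitting. However, Step 3, which carries all the weight, is not proved. Comparing first Chern classes only shows that the cokernel $Q$ of $\rho^*V(-E)\hookrightarrow\calE$ has $c_1(Q)=(x+y-e)F$ (note the sign), i.e.\ $k:=x+y-e\ge 0$. Your sentence ``comparing $c_2$ (both equal to $e-1$ when $x+y=e$)'' presupposes $k=0$, which is exactly what must be shown. Whitney-formula bookkeeping for elementary modifications cannot settle it either, because $c_2$ of a fibre-supported sheaf is not determined by its $c_1$: $\calO_{F_0}(a)$ has $c_1=F$ but $c_2=-a$. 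For instance, a cokernel $Q(E)\cong\calO_{F_0}$ of $\rho^*V\to\calE(E)$ gives $k=1$ and still $c_2(\calE(E))=0$ and $c_2(\calE)=e-1$. It is consistent with every number you use, and is excluded only because $\calE(E)$ would then be $\rho^*$ of a bundle strictly containing $V$, contradicting $V=\rho_*(\calE(E))$.

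That saturation property is the missing idea. Push forward $0\to\rho^*V\to\calE(E)\to Q(E)\to 0$ and use $R^1\rho_*\calO_Y=0$ to get $\rho_*(Q(E))=0$. By Leray, $\chi(Q(E))=-h^0(R^1\rho_*Q(E))\le 0$. Riemann--Roch gives $\chi(Q(E))=\mathrm{ch}_2(Q(E))+k$ (as $F\cdot K_Y=-2$), and the Whitney formula gives $c_2(\calE(E))=c_2(Q(E))=-\mathrm{ch}_2(Q(E))$. Hence $c_2(\calE(E))=k+h^0(R^1\rho_*Q(E))\ge k$. On the other hand, $c_2(\calE(E))=c_2(\calE)+c_1(\calE)\cdot E+E^2=(e-1)+(2-e)-1=0$. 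So $k=0$, the determinant of $\rho^*V\to\calE(E)$ is a nonzero section of $\calO_Y$, and $\calE\cong\calO_Y(-E-xF)\oplus\calO_Y(-E-yF)$ with $x+y=e$.

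Step 4 also needs repair. The bound $x,y\ge 2$ does not come from distinctness of $B\cap E$ but from connectedness of $G=\pi^{-1}(E)$, i.e.\ $h^0(\calE|_E)=h^0(\calO_E(1-x)\oplus\calO_E(1-y))=0$. For example, take $(x,y)=(1,2)$ with general data: $a|_E$ is a nonzero constant and $D|_E$ is a general binary quadric, so $B$ meets $E$ transversally in two points and $E\not\subset B$, yet $G$ is disconnected. For $2x=y$ with $x\ge 2$, the contradiction is not $E\subset B$ or non-normality but tangency. Here $b$ is a multiple of the equation of $E$, so $D|_E=(a|_E)^2(4ac-3d^2)|_E$ vanishes doubly at the $x-1\ge 1$ zeros of $a|_E$. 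Your case $2x<y$ is fine: $b=0$ gives $D=a^2(4ac-3d^2)$, which is non-reduced.
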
 

\begin{proof} There is a commutative diagram
\begin{equation}\label{eq:comm}
  \xymatrix{
    S \ar[d]_{\phi}  \ar[rr]^{\pi}  &  &  \ar[d]^{p} Y  \\
X  \ar[rr]^{f} & &\bbP^2
}
\end{equation}
where $\phi: S\longrightarrow X$ is the contraction of the curve $G=\pi^*(E)$. 

Arguing as in \S \ref {sec:fibres} we see that the pull back to $S$ 
of the pencil $|F|$ of $Y$ is a linear system of rational curves $|C|$. 
Hence $S$ is rational by Noether's theorem.

As in \S \ref {sec:prb}, 
we see that (under the generality hypotheses on $B$) 
the curve $G$ on $S$ is a smooth curve of genus $e-4$ 
and self--intersection $G^2=-3$.

As in \S \ref {sec:prh} we see that the pre-image of a general curve in $|H|$ 
is a smooth curve $M$ with genus $e-2$. 
Moreover $M\sim G+C$ and $\dim(|M|)=2$. 

Let $\calE$ be the Tschirnhausen bundle of $\pi: S\longrightarrow Y$. 
We have
\begin {equation}\label{eq:c1}
c_1(\calE) = -2B-eF.
\end{equation}
By the formula
\[
\chi(\calO_S) = 3\chi(\calO_S) + \frac{1}{2}c_1(\calE)^2 - \frac{1}{2}c_1(\calE)\cdot K_S - c_2(\calE),
\] 
since $\chi(\calO_S)=\chi(\calO_S)=1$ 
and by \eqref {eq:c1}, we deduce that $c_2(\calE)=e-1$, 
so that the branch curve $B$ has $3c_2(\calE)=3e-3$ cusps. 
Then the computation $K_S^2=11-3e$ is still valid. 

At this point we can proceed just as in Section \ref {sec:geom}. 
Namely, we can blow down exactly $3e-3$ exceptional
$(-1)$--curves  in fibres of $|C|$ getting a morphism $g: S\longrightarrow \bbF_h$, 
with a suitable $h\geq 0$, 
and we can do this in such a way that the image $\bar G$ of $G$ via $g$ 
is still a smooth curve with genus $e-4$ 
whose self intersection is given by \eqref {eq:g2}. 
On $\bar G$ there is a $0$--dimensional subscheme $Z$ of length $3e-3$ 
that is the contraction of the $3e-3$ exceptional $(-1)$--curves. 

With the same notation as in Section \ref {sec:geom}, 
we still have \eqref {eq:g22}  and \eqref {eq:e}.  
The image $\mathcal H$ linear system of $|M|$ on $\bbF_h$ 
is  a $2$--dimensional  sublinear system of 
$|3\mathfrak e+ (\ell+1) \mathfrak f|$  
with simple base points at the subscheme $Z$ of $\bar G$ 
that are the complete intersection of $\bar G$ 
with a general curve $\bar H$ in $\mathcal H$. 
The commutative diagram \eqref {eq:diag} still holds. 

In conclusion the triple cover $\pi: S\longrightarrow Y=\bbF_1$ 
is exactly \ the same \  that we constructed  in Section \ref {sec:trip}, 
and this implies that the Tschirnhausen bundle 
necessarily splits as in \eqref {eq:split}, with $x+y=e$. 
\end{proof}

\section{Multiple planes}\label{sec:mult}

Consider in general multiple covers $\pi: S\longrightarrow Y=\bbF_1$ 
of degree $m\geq 4$ (the case $m=2$ is trivial) 
such that the branch curve $B$ is irreducible reduced 
and such that $B\cdot F=2m-2$.  
By imitating what happens for the case $m=3$, 
in this section we are going to show how to construct infinitely many families of such covers. Note that 
\[
B\sim (2m-2)E+2eF,
\]
for some positive integer $e\geq m-1$, so that $B\cdot E=2e-2m+2$.  

There is a commutative diagram \eqref {eq:comm}
where $f:  X\longrightarrow \bbP^2$ 
is a {simpler } multiple plane of degree $m$ 
for which there is a branch curve $\mathfrak B$ of degree $2e$ 
with a point of multiplicity $2e-2m+2$. 
The consideration of such a multiple plane 
is equivalent to the consideration 
of the multiple cover $\pi: S\longrightarrow Y$ of degree $m$.

\begin{theorem}\label{thm:ext} 
For every $m\geq 2$ there exist {simpler } multiple planes of degree $m$ as above.
\end{theorem}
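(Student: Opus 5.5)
The plan is to imitate the geometric description of Proposition \ref{prop:geom}. I build the cover from a Hirzebruch surface rather than from a Tschirnhausen bundle, since for $m\geq 4$ no structure theory comparable to \cite{Miranda85} is available.

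\emph{Construction.} Fix $h\geq 0$ and $\ell\geq mh$, large enough that $|m\frake+\ell\frakf|$ on $\bbF_h$ contains smooth irreducible curves. Choose a smooth $\bar G\in|m\frake+\ell\frakf|$ and a general $\bar H\in|\bar G+\frakf|$. Set $Z=\bar G\cap\bar H$, which is a reduced set of
$$\bar G\cdot(\bar G+\frakf)=\bar G^2+m$$
points. Let $\calH$ be the net spanned by $\bar H$ and the pencil $\bar G+|\frakf|$. Every member of $\calH$ contains $Z$, and for general choices the base locus of $\calH$ is exactly $Z$, with simple base points. Let $S$ be the blow-up of $\bbF_h$ at $Z$, let $G$ be the strict transform of $\bar G$, and let $M$ be the proper transform of $\calH$. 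Then $M\sim G+C$, where $C$ is the pull-back of $\frakf$, and
$$M^2=(\bar G+\frakf)^2-(\bar G^2+m)=m.$$
Moreover $M\cdot G=\bar G\cdot\bar H-|Z|=0$. Hence $\varphi_{\calH}$ is a morphism $S\to\bbP^2$ of degree $m$ which contracts $G$ to a point $p$. The members $G+C_t$ of $\calH$ map onto the lines through $p$, so the map lifts to a morphism $\pi:S\to Y=\bbF_1$ with $\pi^*E=G$ and $\pi^*F=C$.

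\emph{Finiteness and normality.} I will check that $\pi$ is finite, i.e.\ that the only curve $\Gamma$ with $M\cdot\Gamma=0$ is $G$, and that no curve is contracted to a point of $E$. The components of fibres of $S\to\bbP^1$ are of three kinds:
\begin{itemize}
\item general fibres $C$, which have $M\cdot C=C\cdot G=m$;
\item exceptional curves over points of $Z$, which meet $G$ once;
\item strict transforms of fibres $\frakf$ through a point of $Z$, which meet $G$ in $m-1$ points.
\end{itemize}
None of these has $M\cdot\Gamma=0$. This needs each fibre of $\bbF_h$ to contain at most one point of $Z$, which holds for general $\bar H$ because $Z$ moves in the series $|\calO_{\bar G}(\bar G+\frakf)|$. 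A horizontal curve $\Gamma\neq G$ satisfies $\Gamma\cdot C>0$, hence $M\cdot\Gamma>0$. Therefore $\pi$ is finite, and since $S$ is smooth the cover is normal. Contracting $G$ gives the multiple plane $f:X\to\bbP^2$, fitting into diagram \eqref{eq:comm}.

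\emph{The simpler condition.} The general fibre $C\cong\bbP^1$ maps $m:1$ onto $F$. By Riemann--Hurwitz it has $2m-2$ branch points, counted with multiplicity, so the general line through $p$ lifts to a rational curve. The point $F\cap E$ is not a branch point, because $\bar G$ meets a general fibre transversally in $m$ distinct points.

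\emph{Main obstacle.} The remaining and hardest step is to show that the branch curve $B$ is reduced (and, as a bonus, irreducible), so that the plane is genuinely simpler. Reducedness means that over a general point of each component of $B$ there is only simple ramification. I would argue by dimension count and generic smoothness. Restricting to a general fibre, $C\to F$ is the map given by the pencil cut on $\bar G$ by $|\bar H|$, and its ramification is simple if the $g^1_m$ induced on the rational curve is general. Since $\bar H$ varies in a large linear system, I expect this to follow from a Bertini-type argument. Failing that, I would exhibit one explicit choice, for instance $h=0$ and $\ell=m$, and compute directly that the ramification divisor maps birationally onto its image. Finally, $e$ is determined by $\ell$ and $h$, as in \eqref{eq:e}. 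Varying $\ell$ therefore yields infinitely many families, and this gives the theorem for every $m$.
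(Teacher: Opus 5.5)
Your construction is the paper's: a smooth $\bar G\in|m\frake+\ell\frakf|$, a general $\bar H\in|\bar G+\frakf|$, $Z=\bar G\cap\bar H$, the net of members of $|\bar H|$ through $Z$ (which is exactly the span of $\bar H$ and $\bar G+|\frakf|$), and $S$ the blow-up of $\bbF_h$ at $Z$. You verify more than the paper does, since its proof simply asserts that the resulting $\pi\colon S\to\bbF_1$ ``is a desired one''. Your observation that $G$ is the only curve with $M\cdot\Gamma=0$ correctly gives finiteness. Your remark that $G$ meets a general $C$ in $m$ distinct points correctly handles the condition at $p$.

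The step you leave open is reducedness of $B$. It is part of the definition of \emph{simpler}, so ``I expect a Bertini-type argument'' leaves a real hole in the proposal; the paper is silent here too. It closes quickly.

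First, correct your description of $C\to F$: it is not a pencil cut on $\bar G$. For a fibre $\frakf_t\cong\bbP^1$ it is the map defined by the pencil spanned by the two degree-$m$ divisors $\bar G\cap\frakf_t$ and $\bar H\cap\frakf_t$, i.e.\ $u\mapsto[g_t(u):h_t(u)]$, where $g_t,h_t$ are the restricted equations.

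Since $\ell\geq mh$, the direct image of $\calO_{\bbF_h}(m\frake+\ell\frakf)$ on $\bbP^1$ is $\bigoplus_{j=0}^m\calO_{\bbP^1}(\ell-jh)$, which is globally generated. So restriction to each fibre maps onto the binary $m$-ics, and the same holds for $\ell+1$. Hence for general $\bar G,\bar H$ and general $t$, the pair $(g_t,h_t)$ is a general pair of binary $m$-ics. The corresponding map then has $2m-2$ simple ramification points with distinct branch values.

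Because $B|_F$ is the branch divisor of $C\to F$ for general $F$, each horizontal component of $B$ has multiplicity one. No fibre lies in $B$, and neither does $E$, since $\pi^*F_t$ (the total transform of $\frakf_t$) and $\pi^*E=G$ are reduced divisors.

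Two smaller points. First, \eqref{eq:e} is the relation for $m=3$ only; in general $B\cdot E=R\cdot G$ gives $2e=(m-1)(2\ell-mh+2)$, which still increases with $\ell$, so your closing remark survives. Second, irreducibility of $B$, implicit in ``as above'', remains unproved in your proposal, exactly as in the paper.
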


\begin{proof}
Consider a Hirzebruch surface $\bbF_h$ 
and take a smooth curve $\bar G$ on $\bbF_h$ such that 
$$
\bar G\sim m\frake+\ell \frakf, \,\, \text{with $\ell$ a positive integer}.
$$
For this it is necessary that $0\leq \bar G\cdot \frake = \ell -mh$. 
Then consider the linear system $|m\frake+(\ell+1) \frakf|$ 
(which is very ample) and take a general curve $\bar H$ in this system, 
that intersects $\bar G$ in
$$
\bar H\cdot \bar G=
(m\frake+\ell \frakf)\cdot (m\frake+(\ell+1) \frakf)
=m(2\ell+1-mh)
$$
points (that in general will be distinct) 
forming a $0$--dimensional subscheme $Z$ of $\bar G$. 
Consider then the linear subsystem $\calH$ of $|\bar H|$, 
formed by all curves that contain $Z$. 

One has $\dim (\calH)=2$ 
because $\bar G$ splits off $\calH$ with one condition 
and the residual system is the pencil $|\frakf|$.  
The map $\varphi_{\calH}: \bbF_h\dasharrow \bbP^2$ 
gives a cover of $\bbP^2$ of degree $m$. 
More precisely,  there is a cartesian diagram \eqref {eq:diag}
where $f: S\longrightarrow Y$ is a degree $m$ cover, 
and $S$ is the blow--up of $\bbF_h$ along $Z$. 
The cover $\pi: S\longrightarrow Y=\bbF_1$ is a desired one. 
\end{proof}

It is conceivable that the construction in this section 
gives all the degree $m$ covers $f: S\longrightarrow Y$
whose branch curve $B$ is reduced and such that $B\cdot F=2m-2$, 
as it happens for $m=2$ (trivial) and for $m=3$ (see Section \ref {sec:conv}), 
or at least those for which $B$ is sufficiently general.

\section{Cremona equivalence}\label{sec:ce}

Given two multiple planes 
$\pi: X\longrightarrow \bbP^2$ and $\pi': X'\longrightarrow \bbP^2$ 
we will say that they are \emph{Cremona equivalent} 
if there is a commutative diagram of the form
\begin{equation}\label{eq:loh}
\xymatrix{
X' \ar[d]_{\pi'}\ar@{-->}[r]^\phi &
X\ar[d]^\pi\\
\bbP^2 \ar@{-->}[r]^\varphi& \bbP^2   }
\end{equation}
such that $\phi:X'\dasharrow X$ and $\varphi: \bbP^2\dasharrow \bbP^2$ 
are birational maps. 

\begin{theorem}\label{thm:noncr} 
Consider two multiple planes of degree $m\geq 3$ of the form 
$\pi: X\longrightarrow \bbP^2$ and $\pi': X'\longrightarrow \bbP^2$ 
of the type we constructed in Section \ref {sec:mult},
with branch curves $\frakB$ and $\frakB'$ respectively, 
of degrees $2e$ and $2e'$ respectively, 
with an ordinary point $p$ of multiplicities $2e-2m+2$ and $2e'-2m+2$ respectively. 
These two multiple planes are Cremona equivalent 
if and only if they are isomorphic, 
i.e., if and only if $\varphi$ is a projective transformation 
and the diagram \eqref {eq:loh} is cartesian. 
\end{theorem}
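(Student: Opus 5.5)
The plan is to show that a Cremona equivalence must preserve the branch curve, and then to show that the branch curves $\frakB$ of these planes are rigid under plane Cremona maps. Only the ``only if'' direction needs proof.

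\textbf{Step 1: $\varphi$ maps $\frakB'$ to $\frakB$.} Suppose given a diagram \eqref{eq:loh}. The branch divisor is a birational invariant of the cover up to the indeterminacy and the exceptional loci of $\varphi$. Resolve $\varphi$ by blowing up the plane, and take normalized fibre products. One sees that the strict transform of $\frakB'$ under $\varphi$ equals $\frakB$, apart from curves contracted by $\varphi$. Since $\frakB$ and $\frakB'$ are irreducible (by generality), $\varphi$ maps $\frakB'$ birationally onto $\frakB$. Hence the two curves have the same geometric genus. In the construction of Section \ref{sec:mult}, Riemann--Hurwitz for the pencil gives this genus as an explicit increasing affine function of $e$ (for $m=3$ it is $3e-6$). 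So already $e=e'$.

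\textbf{Step 2: $\varphi$ is an isomorphism of the plane.} Apply the Noether--Castelnuovo approach to the pair $(\bbP^2,\frakB)$. Take the Cremona degree $d$ of $\varphi$ and the multiplicities $m_i$ of its homaloidal net at its base points $q_i$. The image of $\frakB'$ has degree
\[
2e\,d-\sum_i m_i\,\mathrm{mult}_{q_i}(\frakB').
\]
Setting this equal to $2e$ and using the Noether inequalities $\sum_i m_i=3d-3$ and $\sum_i m_i^2=d^2-1$, the equality forces the $q_i$ to sit at very singular points of $\frakB'$. The only singular points of $\frakB'$ are the point $p$, of multiplicity $2e-2m+2$, and ordinary cusps of multiplicity $2$. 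A cusp cannot absorb enough, because $d>1$ requires some base point with $m_{1}+m_{2}+m_{3}>d$. So the base locus of $\varphi$ must be concentrated at $p$ and at cusps, and then $\varphi$ is a de Jonquières map centred at $p$. Equivalently, $\varphi$ preserves the pencil of lines through $p$. This is also where the adjoint-system argument enters: the pencil of lines through $p$ is intrinsically characterized by the cover, as the unique pencil of rational curves whose pullbacks are rational, and uniqueness follows from $e\geq m$. A de Jonquières map of degree $d>1$ centred at $p$ changes the multiplicity at $p$ to $(d-1)\cdot 2e-(d-2)(2e-2m+2)$-type expressions, adjusted by its other base points. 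Preserving both the degree $2e$ and the ordinary multiplicity $2e-2m+2$, with only cusps available as further base points, then forces $d=1$.

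\textbf{Step 3: conclusion.} Once $\varphi$ is a projectivity sending $\frakB'$ to $\frakB$, $\phi$ is a birational map over $\bbP^2$ between normal finite covers. Since both are normalizations of $\bbP^2$ in the same function field extension, $\phi$ is an isomorphism and the diagram \eqref{eq:loh} is cartesian.

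The main obstacle is Step 2, namely excluding de Jonquières transformations whose further base points lie at cusps of $\frakB'$ or at points of $\frakB'$ infinitely near $p$. I would handle this by a careful multiplicity count, combined with the requirement that the cusps stay ordinary and that $p$ stays an ordinary point of the same multiplicity.
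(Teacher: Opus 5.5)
There is a genuine gap, at exactly the point you call the main obstacle, and it concerns $m=3$.

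Step 1 is sound, with two small additions. First, $\frakB'$ cannot be contracted by $\varphi$, because it has positive geometric genus. Second, the genus does not come from Riemann--Hurwitz on the pencil but from the ramification divisor. One has $R\sim g^*((2m-2)\frake+(2\ell-h+1)\frakf)-\sum E_i$, with $E_i$ the exceptional curves of $g$, and this gives $g(\frakB)=2(2m-3)(e-m+1)/(m-1)$, which is increasing in $e$.

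The Noether part of Step 2 is a non sequitur as written, but it can be made precise. Let $m_0$ be the multiplicity of the homaloidal net at $p$, and let $\mu_i\leq 2$ be the multiplicities of $\frakB'$ at the other base points. Then $e=e'$ and $\sum m_i=3d-3$ give $2e(d-1)=\sum m_i\mu_i\leq m_0(2e-2m+2)+2(3d-3-m_0)$, that is, $m_0(e-m)\geq (e-3)(d-1)$.
\begin{itemize}
\item For $m\geq 4$, since $m_0\leq d-1$, this gives $d=1$ outright. You then do not need your unproved claim that the pencil through $p$ is intrinsic; the paper simply asserts that $\varphi$ preserves it.
\item For $m=3$ it only gives $m_0=d-1$, with all $2d-2$ simple base points $q_i$ at cusps of $\frakB'$. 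For such a de Jonqui\`eres map the count is exactly balanced: the image has degree $2ed-(d-1)(2e-4)-2(2d-2)=2e$ and multiplicity $2e-4$ at $p$. For a de Jonqui\`eres map these are one condition, not two.
\end{itemize}
So your claim that preserving degree and multiplicity, with only cusps as further base points, forces $d=1$ is false for $m=3$. No sharper multiplicity count can repair it.

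The missing idea is a local analysis of the new singularities. The lines $L_i=\overline{pq_i}$ are contracted by $\varphi$ to points $q_i^*$. On the resolution $\tilde L_i\cdot\tilde{\frakB}'=2e-(2e-4)-2=2$, so $\frakB$ acquires a double point at each $q_i^*$. Meanwhile the cusps at $q_i$ are resolved into smooth points of $\frakB$ tangent to the lines $\overline{pq_i^*}$. If $L_i$ meets $\frakB'$ off $p,q_i$ in two distinct points, this double point is a node. That is impossible, since $\frakB$ has only ordinary cusps off $p$. It is an ordinary cusp only when $L_i$ is tangent to $\frakB'$ at a smooth point. So you must add, and actually use, the generality hypothesis that no line joining $p$ to a cusp of $\frakB'$ is tangent to $\frakB'$ elsewhere. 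The relevant condition is not that ``the cusps stay ordinary''.

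For comparison, the paper's own count avoids this case only by taking $x\leq 2d-2$, which tacitly assumes the simple base points are not cusps of $\frakB'$. For $m\geq 4$ the weaker bound $\mu_i\leq 2$ still closes the paper's argument, since it yields $e\geq e'+(m-3)(d-1)$. For $m=3$, however, the case you isolated is genuinely not handled by the numerics alone.
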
 

\begin{proof}

We may assume that $e'\geq e$.
Suppose there is a diagram like \eqref {eq:loh}. 
Then $\varphi: \bbP^2\dasharrow \bbP^2$ has to preserve 
the pencil of lines with centre $p$, 
so it must be a De Jonqui\'eres transformation, 
defined by a linear system of curves of a certain degree $d\geq 1$ 
with a point of multiplicity $d-1$ at $p$ and further $2d-2$ simple base points 
(that can be proper or infinitely near). 
We want to prove that $d=1$. 

The De Jonqui\'eres transformation contracts to $p$ 
the unique rational curve $\Delta$ of degree $d-1$, 
with a point of multiplicity $d-2$ at $p$ 
and passing through the $2d-2$ simple base points. 
Then $\Delta$ has to intersect $\frakB'$ 
in $2e-2m+2$ points off the aforementioned base points. 
Hence we have
$$
2e-2m+2=2e'(d-1)-(2e'-2m+2)(d-2)-x,\,\, \text {with}\,\, x\leq 2d-2;
$$
therefore we deduce that
$$
e\geq e'+(m-2)(d-1)\geq e'+d-1.
$$
Since $e'\geq e$, we find $d\leq 1$, as wanted. 
\end{proof}

{\
\begin{corollary}
There exist infinitely many families of non-Cremona equivalent rational multiple planes of any given degree $m \geq 3$.
\end{corollary}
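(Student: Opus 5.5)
The plan is to combine the existence statement of Section \ref{sec:mult} (Theorem \ref{thm:ext}) with the rigidity statement of Theorem \ref{thm:noncr}. Fix $m\geq 3$. The first step is to produce, for infinitely many values of $e$, a simpler multiple plane of degree $m$ with branch curve of degree $2e$. I will use the construction in the proof of Theorem \ref{thm:ext}. Choose $h\geq 0$, and take $\ell\geq mh$ large enough that $|m\frake+\ell\frakf|$ contains a smooth curve $\bar G$; this holds for instance when $\ell>mh$, where the system is very ample. Blowing up $\bbF_h$ along $Z=\bar G\cap\bar H$ gives a surface $S$ with a degree $m$ map to $Y=\bbF_1$, and contracting $G$ gives the multiple plane $X\to\bbP^2$. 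The surface $X$ is rational, since it is birational to $\bbF_h$.

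The second step is to compute the degree of the branch curve as a function of $(h,\ell)$ and check that it is unbounded. I mimic Section \ref{sec:geom}. The number of points blown up is $|Z|=m(2\ell+1-mh)$, so
\[
K_S^2=8-m(2\ell+1-mh).
\]
On the other hand, Riemann--Hurwitz for $\pi$ gives $K_S=\pi^*K_Y+R$ with $\pi_*R\sim B\sim(2m-2)E+2eF$, so $e$ is determined by the numerical data of $S$. A faster route is to use the genus of $G$. Since $\bar G\sim m\frake+\ell\frakf$ on $\bbF_h$, adjunction gives $g(G)$ as an explicit quadratic in $\ell,h$. By Riemann--Hurwitz for $G\to E$, branched in $B\cdot E=2e-2m+2$ points, we have $2g(G)-2=-2m+2e-2m+2$. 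Hence $e=g(G)+2m-2$, and $g(G)$ tends to infinity as $\ell\to\infty$ with $h$ fixed. So the values of $e$ are unbounded, and we obtain infinitely many distinct values $e_1<e_2<\cdots$.

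The third step applies Theorem \ref{thm:noncr}. Two such multiple planes with $e\neq e'$ cannot be isomorphic, because an isomorphism (a projective transformation together with a cartesian diagram) carries branch curve to branch curve and so preserves its degree $2e$. By Theorem \ref{thm:noncr} they are therefore not Cremona equivalent. Each value $e_i$ is attained by a family, obtained by varying $\bar G$ and $\bar H$, so there are infinitely many families that are pairwise non-Cremona equivalent.

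I expect the main obstacle to be the second step: confirming that the constructed cover actually has reduced branch curve of the stated shape, with a point of multiplicity exactly $2e-2m+2$ at $p$ that is ordinary. This amounts to showing that $G$ maps isomorphically onto $E$-type data with simple ramification for general choices. I would handle it by a generality argument: for general $\bar H$ the points of $Z$ are distinct, and the map $G\to E$ is the pencil $|\frakf|$ restricted to $\bar G$, which for general $\bar G$ has only simple ramification. That gives $B\cdot E$ transversal and distinct points, hence an ordinary multiple point at $p$.
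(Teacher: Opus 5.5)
Your proposal is correct and takes the same route the paper implicitly uses: combine the existence result of Theorem \ref{thm:ext}, where the branch degree $2e$ is unbounded as $(h,\ell)$ vary, with Theorem \ref{thm:noncr}, noting that isomorphic multiple planes have branch curves of the same degree. Your explicit formula $e=g(G)+2m-2=(m-1)\ell-\frac{m(m-1)}{2}h+m-1$ recovers \eqref{eq:e} when $m=3$, and it fills in a step the paper leaves unstated for $m\geq 4$.
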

} 

\end{document}